\def\CC {{\mathbb C}}     
\def\LL {{\mathbb L}}     
\def\NN {{\mathbb N}}     
\def\OO {{\mathbb O}}     
\def\PP {{\mathbb P}}     
\def\XX {{\mathbb X}}     
\def\ZZ {{\mathbb Z}}     
\def\lw  {\longrightarrow}
\def\mc {\mathcal}
\def\mk {\mathfrak}
\def\ol  {\overline}
\def\rw  {\rightarrow}
\def\ul  {\underline}
\newtheorem{theorem}{Theorem}[section]
\newtheorem{lemma}[theorem]{Lemma}
\newtheorem{prop}[theorem]{Proposition}
\newtheorem{coro}[theorem]{Corollary}
\newtheorem{rem}{Remark}[section]
\newtheorem{defin}{Definition}[section]
\newtheorem{ex}{Example}[section]
\begin{document}

\title{Secant varieties and degrees of invariants}

\author{Valdemar V. Tsanov}

\maketitle

\begin{abstract}
The ring of invariant polynomials $\CC[V]^G$ over a given finite dimensional representation space $V$ of a complex reductive group $G$ is known, by a famous theorem of Hilbert, to be finitely generated. The general proof being nonconstructive, the generators and their degrees have remained a subject of interest. In this article we determine certain divisors of the degrees of the generators. Also, for irreducible representations, we provide lower bounds for the degrees, determined by the geometric properties of the unique closed projective $G$-orbit $\XX$, and more specifically its secant varieties. For a particular class of representations, where the secant varieties are especially well behaved, we exhibit an exact correspondence between the generating invariants and the secant varieties intersecting
the semistable locus. \footnote{This work was supported by DFG grant Sachbeihilfe DFG-AZ: TS 352/1-1.}
\end{abstract}

\small{
\tableofcontents
}


\section{Introduction}

Let $G$ be a reductive complex algebraic group and $V$ be a finite dimensional $G$-module. A famous theorem of Hilbert asserts that the ring of invariant polynomials $\CC[V]^G$ is finitely generated. The general proof being nonconstructive, the generators and their degrees have remained a topic of significant interest. The generators can be chosen homogeneous and, while the finite set of generators is not unique, as at least scaling is always possible, their degrees are uniquely determined, if one convenes to an increasing order and takes multiplicity into account. We say that $\CC[V]^G$ admits a generator of degree $d>0$, if the degree component $\CC[V]_d^G$ is not contained in the subring generated by $\CC[V]_{<d}^{G}$. The maximal degree of a generator is called the Noether number, ${\rm No}(G,V)$, this is the minimal $d$ for which $\CC[V]_{\leq d}^G$ generates $\CC[V]^G$. If $\CC[V]^G\ne\CC$, we denote by $d_1$ the minimal positive degree of a generator.

In the following theorem we identify certain divisors of the degrees of the generators. The proof is given in Section \ref{Sect RDsetsAndInv}.

\begin{theorem}\label{Theo I RDSiml-Invar}
Let $H\subset G$ be a Cartan subgroup with weight lattice $\Lambda$ and root system $\Delta\subset\Lambda$. Let $\Lambda(V)$ be the set of $H$-weights of a given finite dimensional $G$-module $V$. Suppose that $M\subset \Lambda(V)$ satisfies the following two properties:
\begin{enumerate}
\item[{\rm (i)}] $M\cap (M+\Delta)=\emptyset$ (we call such sets root-distinct);
\item[{\rm (ii)}] $M$ is linearly dependent over $\ZZ_{>0}$ and minimal with this property (we call such sets balanced simplices, and we denote by $b_M=\sum\limits_{\nu\in M} b_\nu)$, where $b_\nu\in\ZZ_{>0}$ are the unique coefficients with greatest common divisor 1 such that $\sum\limits_{\nu\in M} b_\nu\nu=0$.)
\end{enumerate}
Then the ring of invariants $\CC[V]^G$ admits a generator of degree $kb_M$ for some integer $k\geq 1$.
\end{theorem}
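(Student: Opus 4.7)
The plan is to apply the Kempf--Ness moment map criterion to produce a $G$-semistable point inside the subspace $V_M = \bigoplus_{\nu \in M} V_\nu$, and then extract a generator of $\CC[V]^G$ via a numerical-semigroup argument applied to the restriction map $\rho\colon \CC[V]^G \rightarrow \CC[V_M]^H$. The map $\rho$ is well-defined (and $H$-equivariant) because $V_M$ is $H$-stable.

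For the semistability step, I would fix a $K$-invariant Hermitian inner product on $V$ (for $K$ a compact real form of $G$), pick a weight vector $v_\nu \in V_\nu$ with $\|v_\nu\|^2 = b_\nu$ for each $\nu \in M$, and set $v = \sum_{\nu \in M} v_\nu \in V_M$. The balanced relation $\sum b_\nu \nu = 0$ immediately gives $\mu(v)|_{\mk{t}} = 0$ for the moment map $\mu\colon V \rightarrow \mk{k}^*$. Here root-distinctness enters to force the vanishing on root directions: for $X \in \mk{g}_\alpha$, orthogonality of distinct weight spaces yields $\langle X v, v \rangle = \sum_\nu \langle X v_\nu, v_{\nu+\alpha}\rangle$, and every summand vanishes because $\nu + \alpha \notin M$ forces $v_{\nu+\alpha} = 0$. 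Hence $\mu(v) = 0$, and Kempf--Ness gives that $v$ is $G$-semistable, so some homogeneous element of $\CC[V]^G$ is nonzero at $v$; therefore $\rho$ has nonzero image.

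Next I would analyze $A := \rho(\CC[V]^G) \subset \CC[V_M]^H$. Minimality of the balanced simplex $M$ forces the semigroup $\{(A_\nu) \in \ZZ_{\geq 0}^M : \sum_\nu A_\nu \nu = 0\}$ to be generated by $(b_\nu)_{\nu \in M}$ (any nonzero element has full support by minimality, hence is a positive integer multiple of $(b_\nu)$), so every nonzero homogeneous element of $\CC[V_M]^H$ has total degree divisible by $b_M$. Consequently $S := \{k \geq 0 : A_{k b_M} \neq 0\}$ is a sub-monoid of $\ZZ_{\geq 0}$ containing some positive integer; I let $e_1 = \min(S \cap \ZZ_{>0})$ and pick $f \in \CC[V]^G$ of degree $e_1 b_M$ with $\rho(f) \neq 0$. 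To see $f$ is a generator: if $f = P(g_1, \ldots, g_s)$ with each $g_j \in \CC[V]^G$ of degree strictly less than $e_1 b_M$, then applying $\rho$ forces each nonzero $\rho(g_j)$ to be homogeneous of degree $k_j b_M$ with $k_j \in S$ and $k_j < e_1$, hence $k_j = 0$; thus only constant $g_j$ contribute, making $\rho(f)$ itself constant, contradicting $\deg \rho(f) = e_1 b_M > 0$. Hence $f$ cannot be expressed in strictly lower-degree invariants, so it is a generator of degree $e_1 b_M = k b_M$ with $k = e_1 \geq 1$.

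The main obstacle is the first step: root-distinctness enters essentially in securing the vanishing of the moment map on root directions. Without condition (i) one would only obtain $\mu(v)|_{\mk{t}} = 0$, which is merely $H$-semistability and does not generally imply $G$-semistability of points in $V_M$; guaranteeing that $\rho$ has nonzero image would then require a significantly more delicate argument, or a different mechanism altogether.
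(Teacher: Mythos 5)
Your proposal is correct and follows essentially the same route as the paper: root-distinctness forces the off-torus components of the moment map to vanish by orthogonality of weight spaces, the balanced relation kills the torus component, Kempf--Ness (the paper uses the projective momentum map and Heckman's theorem instead) yields semistability and hence a nonvanishing invariant, and restriction to the weight subspace forces the degree to be a multiple of $b_M$. If anything, your numerical-semigroup argument for why a minimal-degree invariant with nonzero restriction is actually a \emph{generator} is spelled out more completely than in the paper, which restricts to the coordinate ring of the torus orbit closure $\overline{Hv}$ and leaves that final step implicit.
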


\begin{rem}
The above notion of a root-distinct set generalizes a notion introduced by Wildberger, \cite{Wildberger}, in his study of momentum maps. Wildberger's approach was taken further by Sjamaar, \cite{Sjamaar}, Smirnov, \cite{Smirnov}, and Hristova, Ma\c{c}ia\.{z}ek and the author, \cite{HMT}. All these works seek to understand momentum images or, equivalently, Brion polytopes in Smirnov's case. In the present note we relate the root-distinct sets to degrees of invariants. A similar construction of invariants corresponding to sets of weights has been introduced by Wehlau, \cite{Wehlau93}, but limited to the case of tori, where the root-distinctness is clearly irrelevant. Wehlau's condition is indeed an analogue to (ii). Thus, the root-distinctness property given in condition (i) is, in a sense, the supplementary assumption allowing to generalise Wehlau's construction from tori to general reductive groups. 
\end{rem}

Let $\PP=\PP(V)$ be the projective space of $V$ with the induced group action. We denote by $J=\CC[V]_{\geq 1}^G$ the ideal in the invariant ring vanishing at $0$. Let $\PP^{us}\subset\PP$ be the zero-locus of $J$, also known as the unstable locus, or the nullcone. The complement $\PP^{ss}=\PP\setminus\PP^{us}$ is called the semistable locus. We denote by $\sqrt{I(J)}$ the radical ideal of $\CC[V]$ generated by $J$.

In the second part of the article, we focus on the case $G$ is semisimple and $V$ is irreducible and nontrivial, so that $V=V(\lambda)$, where $\lambda\ne 0$ is the highest weight with respect to any fixed Borel subgroup $B\subset G$. There is a unique closed $G$-orbit in the projective space, the orbit of highest weight vectors, which we denote by
$$
\XX=G[v_\lambda]\subset \PP \;.
$$
For $r\in\NN$, the secant variety $\sigma_r(\XX)$ is defined as the Zariski close of the union of linear spaces spanned by $r$ points on $\XX$:
$$
\Sigma_r=\sigma_r(\XX)=\ol{\bigcup\limits_{x_1,...,x_r\in\XX} \PP Span(x_1,...,x_r)} \subset \PP \;.
$$
Since the representation is irreducible, $\XX$ spans the projective space. Thus, for large enough $r$, we get $\sigma_r(\XX)=\PP$; the minimal such $r$ is called the generic rank in $\PP$ with respect to $\XX$, denoted here by $r_g$.

The secant varieties behave naturally well with respect to the group action, but their definitions are independent of the group, once the variety $\XX$ is given. Thus the properties of these varieties, their ideals and coordinate rings could be expected to have nontrivial consequences on the properties of invariant rings. The closed orbit is always unstable $\XX \subset \PP^{us}$. As a corollary of a result of Landsberg and Manivel, \cite{Lands-Mani-2003-ProjGeo}, we obtain the following theorem.

\begin{theorem}
\begin{enumerate}
\item[{\rm (i)}] Let $r_{us}=\max\{r\in\NN: \sigma_r(\XX)\subset \PP^{us}\}$. Then $r_{us}<d_1$.
\item[{\rm (ii)}] If $\PP^{us}\subset \sigma_r(\XX)\ne \PP$ for some $r$, then $\sqrt{I(J)}_{r+1}$ contains the $(r-1)$-st prolongation of the degree 2 component of the ideal of $\XX$, i.e.,
$$
I_2(\XX)^{(r-1)}=(I_2(\XX)\otimes S^{r-1}V^*)\cap S^{r+1}V^* \subset \sqrt{I(J)}_{r+1}.
$$
\end{enumerate}
\end{theorem}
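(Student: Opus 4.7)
The plan is to derive both statements from a single Landsberg--Manivel input, namely the inclusion
$$I_2(\XX)^{(r-1)}\subseteq I(\sigma_r(\XX))_{r+1},$$
combined with an elementary polarization argument controlling the low-degree part of the ideal of a secant variety.

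For (i) I would argue by contradiction. Suppose $d_1\le r_{us}$ and pick a nonzero invariant $f\in\CC[V]_{d_1}^G$. Every nonconstant invariant vanishes on $\PP^{us}$, and by hypothesis $\sigma_{r_{us}}(\XX)\subseteq\PP^{us}$, so $f\in I(\sigma_{r_{us}}(\XX))$. For arbitrary $x_1,\ldots,x_{d_1}\in\XX$ the point $t_1x_1+\cdots+t_{d_1}x_{d_1}$ lies in the affine cone over $\sigma_{d_1}(\XX)\subseteq\sigma_{r_{us}}(\XX)$, so $f(t_1x_1+\cdots+t_{d_1}x_{d_1})$ vanishes identically in the variables $t_1,\ldots,t_{d_1}$. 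Reading off the coefficient of $t_1t_2\cdots t_{d_1}$ yields $\tilde f(x_1,\ldots,x_{d_1})=0$ for the symmetric polarization $\tilde f$ of $f$ and all $x_i\in\XX$. Since $\XX$ spans $V$ by irreducibility of the representation (as noted just before the theorem), multilinearity forces $\tilde f=0$ and hence $f=0$, a contradiction. Therefore $d_1>r_{us}$.

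For (ii) the hypothesis $\PP^{us}\subseteq\sigma_r(\XX)$ reverses to an inclusion of ideals of polynomials vanishing on these sets: any polynomial vanishing on $\sigma_r(\XX)$ vanishes on $\PP^{us}$, so $I(\sigma_r(\XX))\subseteq I(\PP^{us})=\sqrt{I(J)}$, the equality being the projective Nullstellensatz applied to the ideal generated by $J$. The condition $\sigma_r(\XX)\ne\PP$ keeps this inclusion nonvacuous. Now invoke the Landsberg--Manivel prolongation statement $I_2(\XX)^{(r-1)}\subseteq I(\sigma_r(\XX))_{r+1}$, valid for the closed orbit $\XX=G[v_\lambda]$ of highest weight vectors. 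Taking degree $r+1$ components and composing the two inclusions gives $I_2(\XX)^{(r-1)}\subseteq\sqrt{I(J)}_{r+1}$, as required.

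The principal obstacle is identifying and applying the Landsberg--Manivel statement in its precise form. The geometric content is that the $(r-1)$-st prolongation of $I_2(\XX)$ cuts out $\sigma_r(\XX)$ at least set-theoretically, which ultimately relies on the fact that the closed orbit of highest weight vectors in an irreducible representation is scheme-theoretically defined by quadrics. This is classical but needs to be cited carefully. By contrast, part (i) is essentially formal once the polarization principle is in place and uses only the nondegeneracy of $\XX$ in $\PP V$; no deep input from \cite{Lands-Mani-2003-ProjGeo} is required for it.
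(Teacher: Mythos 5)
Your proof is correct. For part (ii) you follow exactly the paper's route: the set-theoretic inclusion $\PP^{us}\subset\sigma_r(\XX)$ reverses, via the Nullstellensatz, to $I(\Sigma_r)\subset I(\PP^{us})=\sqrt{I(J)}$, and the Landsberg--Manivel description $I_{r+1}(\Sigma_r)=(I_2(\XX)\otimes S^{r-1}V^*)\cap S^{r+1}V^*$ (valid because $I(\XX)$ is generated by quadrics, by Kostant's theorem, which the paper records before quoting Landsberg--Manivel) yields the claim in degree $r+1$. For part (i) the paper reads the conclusion off the same cited theorem, specifically off the vanishing $I_r(\Sigma_r)=0$: a nonconstant homogeneous invariant vanishes on $\PP^{us}\supset\Sigma_{r_{us}}$, and no nonzero form of degree $\leq r$ lies in $I(\Sigma_r)$, whence $d_1\geq r_{us}+1$. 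You instead prove that vanishing statement directly: a form $f$ of degree $d\leq r$ vanishing on the cone over $\sigma_d(\XX)$ has full polarization vanishing on $\hat\XX^{\times d}$ (the coefficient of $t_1\cdots t_d$), hence on all of $V^{\times d}$ by multilinearity and the nondegeneracy of $\XX$, hence $f=0$ in characteristic zero. This makes part (i) self-contained --- it is in effect the standard proof of the elementary half of the Landsberg--Manivel statement, and it isolates the fact that (i) needs only the nondegeneracy of $\XX$, not the quadratic generation of its ideal. One cosmetic point: your $x_1,\dots,x_{d_1}$ should be taken in the affine cone $\hat\XX$ rather than in the projective variety $\XX$ itself.
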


The proof is presented in Section \ref{Sect SecAndInv} along with some basic material about secant varieties. In Section \ref{Sect rssemicont}, we show that for one class of varieties with particularly well behaved secant varieties, the generators of the invariant ring correspond exactly to the secant varieties intersecting the semisitable locus, and in particular, there is a bijective correspondence between the sets of numbers $\{r_{us}+1,...,r_g\}$ and $\{d_1,...,{\rm No}(G,V)\}$. Remarkably, in several cases these sets are in fact equal.

\section{Root-distinct sets of weights and invariants}\label{Sect RDsetsAndInv}

In this section we give a proof of the main existence theorem.

\begin{proof}[Proof of Theorem \ref{Theo I RDSiml-Invar}]
Let $T\subset H$ be the maximal compact subgroup of the given torus $H\subset G$, and let $K\subset G$ be a maximal compact subgroup containing $T$. Let $\langle,\rangle$ be a $K$-invariant Hermitean form on $V$. We consider the Killing form on ${\mk k}$ and use it to set an isomorphism between ${\mk k}$ and ${\mk k}^*$, and to embed ${\mk t}^*$ as a subspace of ${\mk k}^*$. 

We consider the $K$-equivariant momentum map:
$$
\mu : \PP \lw i{\mk k}^* \;,\quad \mu[v](\xi)=\frac{\langle\xi v,v\rangle}{\langle v,v\rangle} \;,\; [v]\in\PP,\xi\in{\mk k}\;.
$$
We denote the fibres of the momentum by ${\mc M}_\xi=\mu^{-1}(\xi)$. Then ${\mc M}_0$, if non-empty, is preserved by $K$, contained in the semistable locus and, by the theorem of Kirwan ${\mc M}_0/K = \PP_{ss}//G$, where the latter denotes the GIT-quotient.

The weight space decomposition $V=\oplus_{\nu\in\Lambda(V)}V_\nu$ is orthogonal. For any subset $M\subset \Lambda(V)$, we denote $V_M=\oplus_{\nu\in M}V_\nu$ and $\PP_M=\PP(V_M)\subset \PP$.

\begin{lemma}(Wildberger)
 
If $M\subset\Lambda(V)$ is a root-distinct set, then $\mu(\PP_M)={\rm Conv}(M)\subset{\mk t}^*$.
\end{lemma}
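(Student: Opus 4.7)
The plan is to pick a point $[v]\in\PP_M$, write $v=\sum_{\nu\in M}v_\nu$ in the weight decomposition, and compute $\mu[v]$ directly on the two pieces of the real orthogonal decomposition $\mk{k}=\mk{t}\oplus\mk{m}$, where $\mk{m}=\bigoplus_{\alpha\in\Delta^+}\mk{k}_\alpha$ is spanned by the compact real forms $E_\alpha-E_{-\alpha}$ and $i(E_\alpha+E_{-\alpha})$. On $\mk{t}$ the computation is routine: each $V_\nu$ is a $\xi$-eigenspace with eigenvalue $i\nu(\xi)$, the weight spaces are orthogonal with respect to $\scal{\,}{\,}$, so
\[
\mu[v](\xi)\;=\;\frac{1}{\|v\|^2}\sum_{\nu\in M}i\nu(\xi)\,\|v_\nu\|^2\qquad(\xi\in\mk{t}),
\]
which, after identifying $\mk{t}^*$ with its image in $\mk{k}^*$ via the Killing form and the usual $i$-shift, identifies $\mu[v]|_{\mk{t}}$ with the convex combination $\sum_{\nu\in M}\bigl(\|v_\nu\|^2/\|v\|^2\bigr)\nu$.

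The substantive step is showing $\mu[v]$ annihilates $\mk{m}$, and this is exactly where condition (i) is used. For $\xi\in\mk{k}_\alpha$, the operator $\xi$ sends $V_\nu$ into $V_{\nu+\alpha}\oplus V_{\nu-\alpha}$, so
\[
\scal{\xi v}{v}\;=\;\sum_{\nu,\nu'\in M}\scal{\xi v_\nu}{v_{\nu'}}
\]
has each summand vanishing by orthogonality of weight spaces unless $\nu'\in\{\nu+\alpha,\nu-\alpha\}$. But the existence of such a pair $\nu,\nu'\in M$ would contradict $M\cap(M+\Delta)=\emptyset$. Hence every summand vanishes, proving $\mu[v]\in\mk{t}^*$ and establishing $\mu(\PP_M)\subseteq\mathrm{Conv}(M)$.

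For the reverse inclusion, any point $\sum_{\nu\in M}c_\nu\nu\in\mathrm{Conv}(M)$ (with $c_\nu\geq 0$, $\sum c_\nu=1$) is attained by taking $v=\sum_{\nu\in M}\sqrt{c_\nu}\,e_\nu$ with $e_\nu$ a unit vector in $V_\nu$, which is nonzero because $\nu\in\Lambda(V)$. The calculation of the previous paragraph then yields $\mu[v]=\sum c_\nu\nu$, so $\mu(\PP_M)=\mathrm{Conv}(M)$.

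The only real obstacle is bookkeeping: keeping the $i$-factor and the Killing-form identification consistent so that the integral weights $\nu\in\mk{t}^*$ and the values $\mu[v](\xi)$ land in the same real subspace of $\mk{k}^*$. The root-distinctness cancellation itself is a one-line weight argument, and the surjectivity onto $\mathrm{Conv}(M)$ is immediate once the formula for $\mu[v]|_{\mk{t}}$ is established.
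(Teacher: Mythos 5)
Your proof is correct and follows essentially the same route as the paper: both arguments use orthogonality of the weight spaces together with the fact that root vectors shift weights by roots, so that root-distinctness of $M$ forces the momentum map to annihilate the off-torus part of $\mk k$, and then compute the $\mk t$-component explicitly as the convex combination $\sum(\|v_\nu\|^2/\|v\|^2)\nu$, with surjectivity onto ${\rm Conv}(M)$ obtained by choosing $v=\sum\sqrt{c_\nu}\,e_\nu$. The only cosmetic difference is that you work directly with the compact real root spaces $\mk k_\alpha$ while the paper reduces to the complex root vectors $e_{\pm\alpha}$; these are equivalent, as the paper itself notes.
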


In fact, Wildberger, \cite{Wildberger}, proved a similar statement for some very special sets of weights, namely Weyl group orbits, but his method goes through in the general case, as we show below.

\begin{proof} Let $\mk g=\mk h \oplus (\oplus_{\alpha\in\Delta}\mk g_{\alpha})$ be the weight space decomposition with respect to $T$ and let $\Delta^+$ be the system of positive roots for a fixed Borel subgroup $B\subset G$ containing $T$. It is well known that the root vectors $e_\alpha\in\mk g_{\alpha}$ can be chosen so that $\mk k$ is spanned by $\mk t$ and $e_\alpha-e_{-\alpha},i(e_\alpha+e_{-\alpha})$ for $\alpha\in\Delta^+$. The defining formula for $\mu$ clearly extends to a map $\PP\to \mk g^*$ and we denote the coordinate functions, for $\xi\in\mk g$, by $\mu^{\xi}:\PP\to\CC$, $\mu^{\xi}[v]=\langle \xi v,v\rangle/\langle v,v\rangle$. It is easy to see that the simultaneous vanishing of $\mu^{e_\alpha-e_{-\alpha}}$ and $\mu^{i(e_\alpha+e_{-\alpha})}$ is equivalent to the vanishing of $\mu^{e_\alpha}$ and $\mu^{e_{-\alpha}}$. Hence $\mu[v]\in i\mk t^*$ if and only if $\mu^{e_\alpha}[v]=0$ for all $\alpha$. 

The root vectors send weight spaces to weight spaces: $e_\alpha(V_\nu)\subset V_{\nu+\alpha}$. For a given nonzero $v\in V$, the orthogonal projections to the weight spaces, ${\rm pr}_\nu:V\to V_\nu$, define a unique decomposition as a sum of weight vectors $v=\sum {\rm pr}_\nu(v)$; the set ${\rm St}(v)=\{\nu\in\Lambda(V):{\rm pr}_\nu(v)\ne 0\}$ is called the support of $v$. Now observe that ${\rm St}(v)$ is a root-distinct set if and only if ${\rm St}(v)\cap{\rm St}(e_\alpha v)=\emptyset$ for all $\alpha\in\Delta$. In such a case, the orthogonality of weight spaces implies $\mu^{e_\alpha}[v]=0$ for all $\alpha$, and by the above remarks we get $\mu[v]\in i{\mk t}^*$. In fact $\mu[v]=\mu_T[v]$, where $\mu_T$ denotes the momentum map for the $T$-action, which is equal to the composition of $\mu$ with the orthogonal projection from $i\mk k^*$ to $i\mk t^*$. We can conclude that, for a given root-distinct set $M\in\Lambda(V)$, we have 
$$
\mu(\PP_M)=\mu_T(\PP_M)\;,
$$
The latter is known to equal ${\rm Conv}(M)$ by the well known theorem of Atiyah for momentum maps of tori. However, for the case at hand the direct calculation of $\mu_T[v]$ is also easily accessible. We may assume that $||v||=1$. For $\nu\in{\rm St}(v)$, put $a_\nu=||{\rm pr}_\nu(v)||$ and $v_\nu=\frac{1}{a_\nu}{\rm pr}_\nu(v)$. Then
$$
\mu_T[v]=\mu_T[\sum\limits_{\nu\in{\rm St}(v)} a_\nu v_\nu ] = \sum\limits_{\nu\in M} |a_\nu|^2 \nu \quad,\quad {\rm with}\quad \sum\limits_{\nu\in M} |a_\nu|^2=1 \;.
$$
This implies $\mu(\PP_M)={\rm Conv}(M)\subset{\mk t}^*$.
\end{proof}

Returning to the proof of the theorem, the hypothesis (i), in view of the above lemma, implies that $\mu(\PP_M)={\rm Conv}(M)$. Form hypothesis (ii) we infer that $0\in{\rm Conv}(M)$, and hence ${\mc M}_0\cap\PP_M\ne\emptyset$. In particular, picking any set of weight vectors $v_\nu\in V_{\nu}$ of norm 1, and setting
$$
v=\sum\limits_{\nu\in M} \sqrt{b_\nu}v_\nu\;, \quad\textrm{we get}\quad \mu[v]=\frac{1}{||v||}\sum\limits_{\nu\in M}b_\nu\nu =0\;.
$$
By Heckman's theorem, \cite{Heckman82}, it follows that $[v]\notin \PP^{us}$ and hence there exists a nonconstant homogeneous invariant polynomial $f\in \CC[V]^G$ with $f(v)\ne 0$. The restriction of $f$ to the torus orbit $Hv\subset V$ is a nonzero constant. The orbit closure $\LL=\ol{Hv}\subset\PP_M$ is a linear subspace of $\PP$ and so, in particular, a projective toric $H$-variety. Let $R$ denote the homogeneous coordinate ring of $\LL$ and let ${\rm res}:\CC[V]\rw R$ denote the quotient morphism, which is surjective and $H$-equivariant. We have ${\rm res}(\CC[V]^G)\subset R^H$; this restricted map is not necessarily surjective, but we have ${\rm res}(f)\ne0$. Since $\LL$ is a toric variety, $R^H$ is isomorphic to a polynomial ring on one variable. The hypothesis (ii) and specifically the supplementary assumption ${\rm gcd}\{b_\nu:\nu\in M\}=1$ implies that $R^H$ is generated by ${\rm res}(p)$, where
$$
p=\prod\limits_{\nu\in M} z_\nu^{b_\nu} \quad,\quad {\rm deg}(p)=b_M \;,
$$
where $z_\nu$ denotes the coordinate to $v_\nu$. Indeed, $p$ is $H$-invariant, it does not vanish on $v$, it is not a power of any other polynomial, and there is no $H$-invariant monomial of smaller degree in the variables $z_\nu,\nu\in M$.

We have ${\rm res}(f)\ne 0$, hence ${\rm res}(f)={\rm res}(p)^k$ for some $k$, and ${\rm deg}(f)=k b_M$.
\end{proof}

\begin{ex}\label{Example PiQ rd balancedsimplex} (The adjoint representation of a simple group)

Consider the case where $G$ is simple of rank $\ell$ and $V=\mk g$ is the adjoint representation. It is well known that $\CC[\mk g]^G$ is isomorphic to a polynomial ring in $\ell$ variables. The degrees $d_1,...,d_\ell$ of the generators are also well known, and are related to a variety of important objects associated to $G$. The minimal degree is $2$ and the maximal is the Coxeter number $h$ of $G$. 

The set of weights with respect to a Cartan subgroup $H$ is $\Delta\cup\{0\}$. Let $\Pi$ be the set of simple roots with respect to a fixed Borel subgroup $B$ and let $\theta=\sum\limits_{\alpha\in\Pi}m_\alpha \alpha$ be the highest root, which is also the highest weight of $\mk g$. The Coxeter number, which we also denote by $h_\Pi$ when necessary, is the given by
$$
h=h_\Pi=1+\sum\limits_{\alpha\in\Pi}m_\alpha \;.
$$
Denote $\Pi^{Q}=\Pi\cup\{-\theta\}$. This set is a root-distinct balanced simplex with
$$
b_{\Pi^Q}=1+\sum\limits_{\alpha\in\Pi}m_\alpha=h_\Pi \;.
$$
More generally, consider any subset $\tilde\Pi\subset \Pi$ corresponding to a simple subgroup of $G$, i.e., to a connected subdiagram of the Dynkin diagram. We denote $\tilde\Pi^{Q}=\tilde\Pi\cup\{-\tilde\theta\}$, where $\tilde\theta$ is the highest root of the sub-root-system generated by $\tilde\Pi$. Then $\tilde\Pi^Q$ is a root-distinct balanced simplex in $\Lambda$, and $b_{\tilde\Pi^Q}=h_{\tilde\Pi}$ is the Coxeter number of the corresponding simple root system. Hence there is an invariant generator of degree $\tilde qh_{\tilde\Pi}$ for some $\tilde q\in\NN$. A connected Dynkin diagram with $\ell$ nodes admits connected subdiagrams of with $\tilde\ell$ nodes for any $1\leq \tilde\ell\leq \ell$. Since $\#\tilde\Pi^Q=\#\tilde\Pi+1$, the root system $\Delta$ admits root-distinct balanced simplices of all dimensions from 1 to $\ell$. Using the known values Coxeter number and the degrees of generating invariants, one finds out that this procedure yields all generators. An a priori proof of this fact could be of interest.
\end{ex}

\section{Secant varieties and degrees of invariants}\label{Sect SecAndInv}

We assume now that $V=V(\lambda)$ is an irreducible representation of a semisimple group $G$, with $\lambda\in\Lambda^+$ being the highest weight of $V$ with respect to a fixed pair of Cartan and Borel subgroups $H\subset B\subset G$. We let $\XX=G[v_\lambda]\subset\PP=\PP(V)$ be the orbit of the highest weight line, the unique closed projective $G$-orbit. We also assume that $\lambda\ne 0$, so that the representation is nontrivial.

We consider linear combinations of points from the affine cone over $\XX$. Since the representation is irreducible the variety $\XX$ spans the ambient space. The rank function on $\PP$ with respect to $\XX$ is defined as
$$
{\rm rk}_\XX:\PP\lw\NN \;,\quad {\rm rk}_\XX[v]=\min\{r\in\NN:v=x_1+...+x_r, [x_j]\in\XX\} \;.
$$
The rank subsets of $\PP$ are defined as
$$
\XX_r = \{[v]\in\PP:{\rm rk}_\XX[v]=r\} \;.
$$
The $r$-th secant variety of $\XX$ is defined as the Zariski closure of the union of linear spaces spanned on $r$ points of $\XX$:
$$
\Sigma_r=\Sigma_r(\XX) = \ol{\bigcup\limits_{x_1,...,x_r\in\XX} \PP Span(x_1,...,x_r)} =\ol{\bigcup\limits_{s\leq r}\XX_s}\;.
$$
The border rank on $\PP$ with respect to $\XX$ is defined as
$$
\ul{\rm rk}_\XX :\PP\le\NN \;,\quad \ul{\rm rk}_\XX[v]=\min\{r\in\NN:[v]\in\Sigma_r \}\;.
$$
There is a unique integer $r_g\geq1$ for which $\XX_{r_g}$ is open in $\PP$ and this is the smallest $r$ for which $\Sigma_r=\PP$; $r_g$ is called the generic rank. We also denote by $r_{\rm max}$ the maximal value of ${\rm rk}_\XX$. The rank function is $G$-invariant, and consequently the sets $\XX_r$ and $\Sigma_r$ are preserved by $G$. We have containments of varieties $\XX\subset\Sigma_2\subset...\subset\Sigma_{r_g}=\PP$ and there is a corresponding chain of $G$-stable ideals $I(\XX)\supset I(\XX_2)\supset...\supset 0$. The ideal of $\XX$ is generated in degree 2, by a suitable generalization of the Pl\"ucker relations, due to Kostant, see e.g. \cite{Landsberg-2012-book}. The following theorem describes the first nonzero degree component of the ideal of the $r$-th secant variety of a variety cut out by quadrics. We formulate it here for the case in hand. We use the identification of $\CC[V]$ with the space of symmetric tensors $SV^*$ inside the tensor algebra on $V^*$.

\begin{theorem}{\rm (Landsberg and Manivel, \cite{Lands-Mani-2003-ProjGeo})}
 
The first nonzero homogeneous component of the ideal $I(\Sigma_r)$ is in degree $r+1$ and is given, for $r\geq 2$, by the $(r-1)$-st prolongation of the generating space $I_2(\XX)$ of the ideal of $\XX$, i.e.
$$
I_r(\Sigma_r)=0 \quad , \quad I_{r+1}(\Sigma_r)=S^{r+1}V^* \cap (I_2(\XX)\otimes S^{r-1}V^*) \;.
$$
\end{theorem}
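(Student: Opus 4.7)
My plan is to reduce vanishing on $\Sigma_r$ to a polarization identity, and then exploit the quadratic generation of $I(\XX)$. For $f\in S^d V^*$ let $\tilde f:V^{\times d}\to\CC$ denote its polarization, the symmetric $d$-linear form recovering $f$ on the diagonal. Since $\Sigma_r$ is the Zariski closure of the union of linear spans of $r$ points of the affine cone $\hat\XX\subset V$, the condition $f|_{\Sigma_r}=0$ is equivalent to
$$
f(t_1 x_1+\cdots+t_r x_r)=0\quad\text{for all}\;\; x_1,\ldots,x_r\in\hat\XX\;\;\text{and all}\;\; t_i\in\CC.
$$
Expanding via the multinomial theorem, this in turn is equivalent to the vanishing of each polarization value
$$
\tilde f\bigl(\underbrace{x_1,\ldots,x_1}_{a_1},\ldots,\underbrace{x_r,\ldots,x_r}_{a_r}\bigr)=0,\qquad a_1+\cdots+a_r=d,\;\; a_i\in\ZZ_{\geq 0},
$$
for arbitrary $x_i\in\hat\XX$.

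For the lower-degree claim $I_d(\Sigma_r)=0$ with $d\leq r$, I would single out the composition with $d$ ones and $r-d$ zeros, obtaining $\tilde f(x_1,\ldots,x_d)=0$ on $\hat\XX^{\times d}$. Because $V$ is irreducible, $\hat\XX$ linearly spans $V$, so multilinearity extends this identity to all of $V^{\times d}$ and forces $f=0$.

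For the key identification $I_{r+1}(\Sigma_r)=I_2(\XX)^{(r-1)}$, I would observe that every composition with $\sum a_j=r+1$ has some entry $a_j\geq 2$; by the symmetry of $\tilde f$ all the multinomial conditions are subsumed in the single identity
$$
\tilde f(x,x,y_2,\ldots,y_r)=0,\qquad x,y_2,\ldots,y_r\in\hat\XX.
$$
Fixing $y_2,\ldots,y_r$, the quadratic form $q(x)=\tilde f(x,x,y_2,\ldots,y_r)\in S^2V^*$ vanishes on $\hat\XX$, and by Kostant's theorem that $I(\XX)$ is generated in degree $2$ we conclude $q\in I_2(\XX)$. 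A final multilinearity-plus-span argument then extends this inclusion from $y_i\in\hat\XX$ to arbitrary $y_i\in V$, which is precisely the defining property of the prolongation $I_2(\XX)^{(r-1)}=S^{r+1}V^*\cap(I_2(\XX)\otimes S^{r-1}V^*)$. The converse inclusion is immediate from the definition of the prolongation: if $f$ lies in $I_2(\XX)^{(r-1)}$, then every multinomial evaluation of $\tilde f$, having at least two slots at a common point of $\hat\XX$, factors through a quadratic form in $I_2(\XX)$ and so vanishes.

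The main obstacle is structural rather than combinatorial: the clean identification with $I_2(\XX)^{(r-1)}$ depends essentially on the quadratic generation of $I(\XX)$ by Kostant's generalized Pl\"ucker relations. Without that input, polarization alone would only place $I_{r+1}(\Sigma_r)$ inside the prolongation of the full graded ideal $I(\XX)$, and would not isolate the degree $2$ part as in the Landsberg--Manivel formula.
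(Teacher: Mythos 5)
The paper does not prove this statement; it is quoted directly from Landsberg--Manivel, so there is no internal proof to compare against. Your polarization argument is correct and is essentially the standard proof of this result: the multinomial expansion reduces vanishing on $\Sigma_r$ to the vanishing of the coefficients $\tilde f(x_1^{a_1},\ldots,x_r^{a_r})$, the all-ones composition kills degrees $d\leq r$ via nondegeneracy of $\XX$, and in degree $r+1$ the pigeonhole forces a repeated slot, giving exactly the prolongation condition. One small correction to your closing remark: Kostant's quadratic generation is not actually what your argument uses or needs. At the step where you invoke it, the quadric $q(x)=\tilde f(x,x,y_2,\ldots,y_r)$ vanishes on $\hat\XX$ and therefore lies in $I_2(\XX)=I(\XX)_2$ by definition of the degree-two component of the ideal; the identification $I_{r+1}(\Sigma_r)=I_2(\XX)^{(r-1)}$ as you have derived it holds for any irreducible nondegenerate variety. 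Quadratic generation is what makes this description useful (it guarantees $I_2(\XX)$ cuts out $\XX$ and underlies the stronger statements about $I(\Sigma_r)$ beyond degree $r+1$), but it is not the hinge of the degree-$(r+1)$ computation.
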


\begin{defin} The rank of instability of the irreducible representation $V=V(\lambda)$ is defined as
$$
r_{us}= \max\{r\in\NN : \Sigma_r\subset \PP^{us} \} \; \;.
$$
When $\CC[V]^G\ne\CC$, so that the semistable locus is nonempty, the rank of semistability is defined as 
$$
r_{ss}= \max\{r\in\NN : \Sigma_r\subset \PP^{ss}\ne\emptyset \} = r_{us}+1 \; \;.
$$
\end{defin}

\begin{rem}

1) The closed $G$-orbit $\XX\subset\PP$ belongs to $\PP^{us}$ as long as the representation is nontrivial; thus $r_{us}\geq1$.

2) The incidence with the nullcone for a projective variety can be tested via the momentum map with respect to a maximal compact subgroup $K\subset G$ and an invariant Hermitean form on $V$. We have: $\Sigma_r\subset\PP^{us}$ if and only if $0\notin \mu(\Sigma_r)$.
\end{rem}

The following proposition is an interpretation of a result of Zak, \cite{Zak-Book}, Ch. III.

\begin{prop}
If $V(\lambda)\ncong V(\lambda)^*$, then $\Sigma_2\subset\PP^{us}$. If $\CC[V(\lambda)]^G\ne\CC$, then $r_{ss}=2$ if and only if $V(\lambda)\cong V(\lambda)^*$.
\end{prop}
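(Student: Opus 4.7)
The proposition has two assertions. Because $r_{ss}=r_{us}+1$ and $\XX\subset\PP^{us}$ forces $r_{us}\geq1$, the condition $r_{ss}=2$ is equivalent to $\Sigma_2\not\subset\PP^{us}$. Hence Part 2 becomes the biconditional $\Sigma_2\not\subset\PP^{us}\Leftrightarrow V\cong V^*$ under the hypothesis $\CC[V]^G\ne\CC$, and its ``$\Rightarrow$'' direction is just the contrapositive of Part 1. The proof therefore reduces to two independent claims:
\begin{enumerate}
\item[(A)] $V\ncong V^*\Rightarrow\Sigma_2\subset\PP^{us}$;
\item[(B)] $V\cong V^*$ and $\CC[V]^G\ne\CC$ imply $\Sigma_2\not\subset\PP^{us}$.
\end{enumerate}

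For (B), I split according to the symmetry type of the invariant bilinear form on $V$. In the orthogonal case there exists a nonzero $q\in(S^2V^*)^G$; since $\XX\subset\PP^{us}$, $q\in I_2(\XX)$. By the Landsberg-Manivel theorem above, $I_2(\Sigma_2)=0$, so $q\notin I(\Sigma_2)$ and some $[v]\in\Sigma_2$ has $q(v)\ne0$; such $[v]$ is semistable. In the symplectic case, $-\lambda=w_0\lambda$ is a weight of $V$, so $v_{-\lambda}$ represents a point of $\XX$, and the set $M=\{\lambda,-\lambda\}$ is a balanced simplex with $b_M=2$. For dominant $\lambda$, $2\lambda\in\Delta$ occurs only for $\lambda=\omega_1$ in types $A_1$ and $C_n$, both of which are symplectic with $\CC[V]^G=\CC$ and hence excluded by hypothesis; so in the symplectic-with-invariants case $M$ is automatically root-distinct. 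Wildberger's lemma from the preceding section then gives $\mu(\PP_M)=\mathrm{Conv}\{\lambda,-\lambda\}\ni0$, and since $\PP_M\subset\Sigma_2$ is the projective line joining $[v_\lambda],[v_{-\lambda}]\in\XX$, we obtain a semistable point in $\Sigma_2$.

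For (A), the claim is the interpretation in our setting of the cited theorem of Zak (Ch.~III of \cite{Zak-Book}), which I would invoke directly. The representation-theoretic shadow of the argument is as follows: polarizing an invariant $f\in(S^dV^*)^G$ and evaluating on $v_1+v_2$ with $v_i\in\wt\XX$ yields $f(v_1+v_2)=\sum_{k=0}^d\binom{d}{k}\tilde f(v_1^{[k]},v_2^{[d-k]})$; the extreme terms vanish since $f|_\XX=0$, and each mixed term is an element of $(S^kV^*\otimes S^{d-k}V^*)^G$ evaluated on $\wt\XX\times\wt\XX$. The hypothesis $V\ncong V^*$ rules out any $G$-invariant bilinear pairing on $V$, which is the first obstruction to such mixed invariants, and Zak's theorem promotes this to their simultaneous vanishing on $\wt\XX\times\wt\XX$ in every degree. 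At the level of ideals, Landsberg-Manivel's identification $I_3(\Sigma_2)=I_2(\XX)^{(1)}$ handles the degree-$3$ instance cleanly; extending the vanishing to higher degrees is the main obstacle, and this is precisely the step supplied by Zak's geometric machinery on secant varieties of rational homogeneous varieties.
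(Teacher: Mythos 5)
The paper offers no proof of this proposition: it is stated as an ``interpretation of a result of Zak, Ch.~III,'' with no argument supplied, so there is no step-by-step comparison to make. Your logical reduction is correct ($r_{ss}=r_{us}+1$ and $r_{us}\geq 1$ give $r_{ss}=2\Leftrightarrow\Sigma_2\not\subset\PP^{us}$, and the forward implication of the biconditional is the contrapositive of the first assertion). Your proof of (B) is correct and self-contained, and actually goes beyond what the paper records: in the orthogonal case the invariant quadric $q$ lies in $I_2(\XX)$ because $\XX\subset\PP^{us}$, while $I_2(\Sigma_2)=0$ by Landsberg--Manivel, so $q$ is nonzero somewhere on $\Sigma_2$; in the symplectic case $M=\{\lambda,-\lambda\}$ is a balanced simplex with $b_M=2$, your check that $2\lambda\in\Delta$ forces $\lambda=\omega_1$ in type $A_1$ or $C_n$ (both with trivial invariant ring, hence excluded) is accurate, and Wildberger's lemma plus Heckman's theorem then produce a semistable point on the secant line $\PP\,\mathrm{Span}(v_\lambda,v_{-\lambda})\subset\Sigma_2$. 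Two small remarks: the $\{\lambda,-\lambda\}$ argument works verbatim in the orthogonal case as well (self-duality already gives $w_0\lambda=-\lambda$), so the case split is not needed; and in the orthogonal case one can also avoid Landsberg--Manivel by noting $q(x+y)=2b(x,y)$ for $x,y$ in the cone over $\XX$ and using nondegeneracy of $b$ together with the fact that $\XX$ spans $\PP(V)$. For (A) you do exactly what the paper does, namely cite Zak; you are right to flag that your polarization sketch is not a proof --- the absence of an invariant pairing does not by itself force the higher-degree mixed terms to vanish --- so the appeal to Zak's theorem on secants of highest weight orbits carries the same weight in your write-up as it does in the paper itself.
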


The above results have the following direct consequence.

\begin{theorem}\label{Prop Sec and d-Inv}

If a nonconstant homogeneous invariant $f\in\CC[V(\lambda)]^G$ vanishes on $\Sigma_r$, then $deg(f)>r$.

Suppose that $\CC[V(\lambda)]^G\ne \CC$ and let $d_1$ be the minimal positive degree of a homogeneous invariant polynomial. Then
$$
r_{ss}\leq d_1 \;.
$$
\end{theorem}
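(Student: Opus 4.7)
The plan is to deduce both assertions directly from the Landsberg--Manivel theorem stated immediately above, so the argument is essentially one of translation.

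For the first assertion, I would observe that a nonconstant homogeneous polynomial $f\in\CC[V(\lambda)]^G$ vanishing on $\Sigma_r$ belongs to the homogeneous ideal $I(\Sigma_r)$, under the harmless identification of vanishing on the projective variety with vanishing on its affine cone, which is automatic for homogeneous polynomials. The Landsberg--Manivel theorem asserts that $I_d(\Sigma_r)=0$ for all $d\leq r$, hence $\deg(f)\geq r+1>r$.

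For the bound $r_{ss}\leq d_1$, I would instantiate the first assertion at $r=r_{us}=r_{ss}-1$. By the very definition of $r_{us}$, the secant variety $\Sigma_{r_{us}}$ is contained in the nullcone $\PP^{us}$, which is set-theoretically the common zero locus of the ideal $J=\CC[V]^G_{\geq 1}$. Therefore every nonconstant homogeneous invariant vanishes on $\Sigma_{r_{us}}$; applying the first assertion with $r=r_{us}$ yields $\deg(f)>r_{us}$, i.e., $\deg(f)\geq r_{us}+1=r_{ss}$. Specializing to an invariant realizing the minimal positive degree gives $d_1\geq r_{ss}$.

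I do not anticipate any genuine obstacle: the substantive geometric input, namely the vanishing of the low-degree graded pieces of $I(\Sigma_r)$, is entirely supplied by the Landsberg--Manivel theorem, and the remainder is routine use of the standard dictionary between the unstable locus and the common zero set of the invariants. The only point worth stating explicitly is the projective-versus-affine vanishing convention, which causes no friction here since all invariants in question are homogeneous.
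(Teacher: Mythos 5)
Your argument is correct and is exactly the one the paper intends: the theorem is presented there as a ``direct consequence'' of the Landsberg--Manivel vanishing $I_d(\Sigma_r)=0$ for $d\leq r$, combined with the observation that every nonconstant homogeneous invariant lies in $J$ and hence vanishes on $\Sigma_{r_{us}}\subset\PP^{us}$. Nothing further is needed.
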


In view of the above theorem, it is natural to ask: given $\Sigma_r\cap\PP^{ss}\ne 0$, is there indeed an invariant of degree $r$? Such an invariant may or may not appear (see example 1.1) but the relation between rank and degree is not accidental. It stems from the fact that certain monomials of invariant polynomials have the form $(z_1...z_r)^k$, where $z_j$ are coordinates with respect to vectors $x_1,...,x_r$ in the affine cone $\hat\XX$, taken as a part of a basis in $V(\lambda)$. This is formulated below after introducing some notation.

Let $W=N_G(H)/H$ denote the Weyl group. Given $\lambda\in\Lambda^+$, the weights $w\lambda, w\in W$ are called the extreme weights of the irreducible $G$-module $V(\lambda)$. The corresponding projective points are exactly the $H$-fixed points in $\XX=G[v_\lambda]$: 
$$
\XX^H = \{x_w=[v_{w\lambda}]\;:\; w\in W\} \;.
$$
This set is in a one-to-one correspondence with the coset space $W/W_\lambda$.



\begin{coro}
Let $\lambda\in\Lambda^+$ and $\XX=G[v_\lambda]\subset\PP(V(\lambda))$. If the Weyl group orbit $W\lambda$ contains a root-distinct balanced simplex with $r$-elements, then $\CC[V(\lambda)]^G$ admits a generator which does not vanish on the secant variety $\Sigma_r(\XX)$ and $r\geq r_{ss}$.
\end{coro}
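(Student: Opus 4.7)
The plan is to deduce the corollary directly from the proof of Theorem \ref{Theo I RDSiml-Invar}, after observing that when the balanced simplex $M$ lies inside the Weyl orbit $W\lambda$, the auxiliary vector $v$ constructed there sits automatically in the affine cone over $\Sigma_r(\XX)$. The only genuinely new input is this geometric placement; promoting the resulting invariant to a generator is handled by the polynomial-ring description of $R^H$ already used in that proof, so I do not anticipate any serious obstacle.

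First I would note that every $\nu \in M \subset W\lambda$ is an extreme weight of $V(\lambda)$, so the weight space $V(\lambda)_\nu$ is one-dimensional and any nonzero $v_\nu \in V(\lambda)_\nu$ satisfies $[v_\nu] \in \XX^H \subset \XX$. Consequently the vector
$$
v = \sum_{\nu \in M} \sqrt{b_\nu}\, v_\nu
$$
appearing in the proof of Theorem \ref{Theo I RDSiml-Invar} is a sum of $r = |M|$ elements of the affine cone $\hat\XX$, and therefore $[v] \in \XX_r \subset \Sigma_r(\XX)$.

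Next, I would invoke Theorem \ref{Theo I RDSiml-Invar} to produce a homogeneous invariant $g \in \CC[V(\lambda)]^G$ with $g(v) \neq 0$. The proof identifies the image of $\CC[V(\lambda)]^G$ inside the toric ring $R^H \cong \CC[{\rm res}(p)]$ as the graded subring generated by some power ${\rm res}(p)^m$, $m\geq 1$. Choosing $g$ of minimal positive degree with $g(v) \neq 0$, every positive-degree invariant of strictly smaller degree must vanish at $v$; a polynomial expression of $g$ in such invariants would then force $g(v) = 0$, so $g$ must be a generator of $\CC[V(\lambda)]^G$ not vanishing on $\Sigma_r(\XX)$. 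Finally, $g(v) \neq 0$ places $[v]$ in $\Sigma_r \cap \PP^{ss}$, so $\Sigma_r \not\subset \PP^{us}$ and the definition of $r_{us}$ yields $r \geq r_{us} + 1 = r_{ss}$.
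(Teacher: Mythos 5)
Your proposal is correct and follows essentially the same route as the paper: both apply the construction from Theorem \ref{Theo I RDSiml-Invar} to the root-distinct balanced simplex $M\subset W\lambda$, observe that the extreme weight vectors $v_{w_j\lambda}$ lie on the affine cone over $\XX$ so that $[v]$ (equivalently, the whole secant plane $\PP\,Span(v_{w_1\lambda},\dots,v_{w_r\lambda})$) sits inside $\Sigma_r(\XX)$, and conclude both the nonvanishing of a generator there and $r\geq r_{ss}$. Your explicit minimal-degree argument for why the invariant can be taken to be a generator is a detail the paper leaves implicit in Theorem \ref{Theo I RDSiml-Invar}, and it is sound.
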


\begin{proof}
The hypothesis means that there exist $w_1,...,w_r\in W$ satisfying the following two conditions:

(i) The set of weights is root-distinct, i.e. $w_j\lambda-w_k\lambda\notin\Delta$.

(ii) $w_1\lambda,...,w_r\lambda$ form a balanced simplex.

The construction in the above theorem yields a generator $f$ of the ring of invariants $\CC[V(\lambda)]^G$, whose restriction to the secant space $\PP Span(v_{w_1\lambda},...,v_{w_r\lambda})$ is the monomial $(z_1^{b_1}\dots z_r^{b_r})^q$, where $q$ is a positive integer, $z_j$ are the coordinates associated to the weight vectors $v_{w_j\lambda}$, and $b_1,...,b_r$ is the unique set of positive integers with greatest common divisor 1 such that $\sum b_jw_j\lambda=0$. Now, clearly $f$ does not vanish on $\PP Span(v_{w_1\lambda},...,v_{w_r\lambda})\subset \Sigma_{r}(\XX)$.
\end{proof}

\begin{ex}
Let us consider the case $\lambda=k\rho$, where $\rho=\frac12\sum\limits_{\alpha\in\Delta^+}\alpha$. For $k\geq 2$, we have $\mu(\PP)=\mu(\Sigma_2)$.
\end{ex}

\begin{ex} (Veronese varieties)

Consider $G=SL_n$ acting on $V=S^k\CC^n$ with $k,n\geq2$. The associated homogeneous projective variety is the Veronese variety $\XX={\rm Ver}_k(\PP^{n-1})\subset\PP(V)$. The extreme weight vectors for a given Cartan subgroup $H$ the are $k$-th powers $v_j^k$ of the corresponding basis vectors $v_1,...,v_n\in\CC^n$; the $W$-orbit of the highest weight $\lambda=k\omega_1$ forms a balanced $n-1$-simplex centered at $0$, with $b_{W\lambda}=\# W\lambda=n$. This simplex is root-distinct, as $k\geq 2$. (For $k=1$, the natural representation of $SL_n$ has no root-distinct sets of weights with more than one element, which amounts to the fact that the projective space is a single $SL_n$-orbit.) Hence $\CC[S^k\CC^n]^{SL_n}$ admits a generator of degree $qn$ for some $q\in\NN$. It is not hard to see that, for $1\leq r\leq n$, $G$ has an open orbit in the secant variety $\Sigma_r$ and the momentum image $\mu(\Sigma_r)\cap i{\mk t}^*$ is the $r$-skeleton of the simplex ${\rm Conv}(W\lambda)$. Thus $\Sigma_r\subset\PP^{us}$ for $r<n$ and $r_{ss}=n$. We can conclude that $d_1\geq n$, where $d_1$ is the minimal degree of a nonconstant homogeneous polynomial in $\CC[S^k\CC^n]^{SL_n}$. For $k=2$, the determinant of symmetric matrices is an invariant of degree $n$. The case $n=3$ shows that, for large $k$, invariants in degree $n$ may or may not occur.
\end{ex}

\begin{rem}
The rank function ${\rm rk}_\XX$ on $\PP$ is independent of any group actions on $\XX$, but the notions of rank of instability and semistability depend on the group. We use the notation $r_{us,G},r_{ss,G}$, when the group needs to be specified. Certain homogeneous projective varieties admit transitive actions by proper subgroups of their automorphism group, say $\tilde G\subset G=Aut\XX$. Such transitive actions of subgroups have been classified by Onishchik. For simple $G$, $\tilde G$ is also simple, and the cases include $\PP^{2n-1}$, homogeneous under $SL_{2n}$ and $Sp_{2n}$; the varieties of pure spinors in the irreducible spin-representation, which are the same for $SO_{2n}$ and $SO_{2n-1}$, the quadric $Q^5$, homogeneous under $SO_7$ and $G_2$. In general, a homogeneous projective variety has a semisimple automorphism group and splits as a product of homogeneous varieties $\XX=\XX^1\times \dots\XX^k$ corresponding to the simple factors of $G$. Then a transitive subgroup $\tilde G\subset G$ is necessarily a product $\tilde G=\tilde G^1\times\dots\times \tilde G^k$, with $\tilde G^j$ transitive on $\XX^k$.

Let us consider the behaviour of the rank for (infinitesimally) simple $G$ and a proper subgroup $G$ acting transitively on $\XX$. It turns out that $\tilde G$ is a simple group without outer automorphisms, i.e., all of its representations are self dual. Thus $r_{us,\tilde G}=1$. Whenever semistable points exist (the only exception the transitive action on the entire projective space $\XX=\PP^{2n-1}$, by $\tilde G=Sp_{2n}\subset SL_{2n}=G$, where we have $r_{max}=1$) the rank of semistability for the subgroup is $r_{ss,\tilde G}=2$ and the representation admits an $\tilde G$-invariant of even degree.
\end{rem}






\section{Rank-semi-continuous varieties}\label{Sect rssemicont}

The correspondence between secant varieties intersecting the semistable locus and generating invariants turns out to be exact for a class of homogeneous projective varieties introduced and classified in \cite{Petukh-Tsan}, where the rank function behaves particularly well.

\begin{defin}
A projective variety $\XX\subset \PP$ is called rank-semicontinuous, or rs-continuous, if the rank and border rank functions on $\PP$ with respect to $\XX$ coincide: ${\rm rk}_\XX=\ul{\rm rk}_\XX$, i.e., points of higher rank cannot be approximated and the Zariski closure in the definition of secant varieties is not necessary.
\end{defin}

\begin{theorem}(\cite{Petukh-Tsan})
A homogeneous projective variety $\XX\subset \PP$ is rs-continuous if and only if $\Sigma_2=\XX\cup \XX_2$, i.e., the required property holds for all $r$ if and only if it holds for $r=2$. The classification of the homogeneous rs-continuous varieties is given in the table below.
\end{theorem}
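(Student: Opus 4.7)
The ``only if'' direction is tautological: rs-continuity gives $\Sigma_r=\bigcup_{s\leq r}\XX_s$ for every $r$, and specialising to $r=2$ yields the stated equality.

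For the converse, I proceed by induction on $r$, with the case $r=2$ supplied by the hypothesis. Assume $\Sigma_{r-1}=\bigcup_{s\leq r-1}\XX_s$. The standard presentation of secant varieties by iterated chord sums gives, at the level of affine cones,
$$
\hat\Sigma_r \;=\; \ol{\hat\XX + \hat\Sigma_{r-1}} \;,
$$
where $+$ denotes Minkowski sum and the closure is taken in $V$. The inductive step reduces to showing that $\hat\XX+\hat\Sigma_{r-1}$ is already closed in $V$, for then every $v\in\hat\Sigma_r$ admits a decomposition $v=x+w$ with $x\in\hat\XX$ and $w\in\hat\Sigma_{r-1}$, and the induction hypothesis rewrites $w$ as a sum of at most $r-1$ elements of $\hat\XX$, so ${\rm rk}_\XX[v]\leq r$.

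To verify closedness, take a sequence $v_n=x_n+w_n\lw v$ with $x_n\in\hat\XX$, $w_n\in\hat\Sigma_{r-1}$. If $(x_n)$ stays bounded, a subsequence converges to some $x\in\hat\XX$ (closedness of the affine cone over a projective variety), hence $w_n\lw v-x\in\hat\Sigma_{r-1}$, where the membership uses that $\hat\Sigma_{r-1}$ is closed by induction, and $v=x+(v-x)$ is the required expression. If $(x_n)$ is unbounded, a compensating unbounded term must sit inside $w_n$; rescaling and extracting the leading order identifies the obstruction with a tangent vector $\xi\in T_y\hat\XX$ for some $y\in\XX$. Here the $r=2$ hypothesis is decisive: since $\Sigma_2=\XX\cup\XX_2$, every such tangent vector has rank at most $2$, i.e.\ $\xi=a+b$ with $a,b\in\hat\XX$. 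Substituting this into the limit and regrouping terms yields a genuine representation $v=x'+w'$ with $x'\in\hat\XX$ and $w'\in\hat\Sigma_{r-1}$, closing the inductive step.

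The main obstacle is the bookkeeping when several of the summands degenerate in parallel: one must exhibit tangent substitutions that absorb the entire divergence while keeping the total number of elementary summands at most $r$. The homogeneity of $\XX$ under $G$ is essential here, as every one-parameter degeneration in $\hat\XX$ takes the form $\lambda(t)\cdot v_0$ for some one-parameter subgroup $\lambda:\CC^\times\lw G$, and the weight filtration of $V$ induced by $\lambda$ yields a finite combinatorial classification of possible degeneration types. Each type can then be resolved by a single invocation of the base case together with the induction hypothesis, completing the proof.
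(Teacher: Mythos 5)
This theorem is imported from \cite{Petukh-Tsan}; the present paper offers no proof of it, and the argument in that reference is classification-based (one first determines which homogeneous $\XX$ satisfy $\Sigma_2=\XX\cup\XX_2$, essentially by controlling the tangent variety and the second osculating behaviour, and then verifies rank-semicontinuity for each variety on the resulting list). Your proposal of a direct induction on $r$ is therefore a genuinely different route, but as written it has a real gap rather than a complete argument.

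The gap sits exactly where you say the ``main obstacle'' is, and it is not a bookkeeping issue that homogeneity resolves for free. First, when a divergent summand is rescaled, the limit of a colliding \emph{pair} lands in a tangent line, but when three or more of the $r$ summands collide at the same point of $\XX$ the leading-order limit lands in a higher osculating space of $\hat\XX$, and the hypothesis $\Sigma_2=\XX\cup\XX_2$ says nothing directly about ranks of osculating vectors; reducing all degeneration types to iterated pairwise collisions is precisely the content of the theorem, not a preliminary reduction. Second, even in the pairwise case your substitution $\xi=a+b$ with $a,b\in\hat\XX$ trades one divergent summand for two honest ones, so the elementary summand count can climb above $r$; you never show how to re-absorb the surplus. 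Third, the claim that every one-parameter degeneration in $\hat\XX$ has the form $\lambda(t)\cdot v_0$ for a one-parameter subgroup $\lambda$ is false for arbitrary curves $g(t)\cdot v_0$ in a homogeneous cone; a reduction to one-parameter subgroups (say via a Hilbert--Mumford or Kempf-type argument) would itself need proof. Finally, the statement also asserts the classification in the table, which your proposal does not touch at all.
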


The table below contains the list of rs-continuous homogeneous varieties with their linear automorphism groups, along with the valies of the rank function on the semistable locus, whenever the latter is nonempty. In the last column we give the degrees $d_1,...,d_k$ of a minimal generating set of homogeneous elements of the invariant ring $\CC[V(\lambda)]^G$. The degrees can be found in Kac's table \cite{Kac-nilp-orb}. (All rs-continuous varieties turn out to have polynomial invariant rings.) The nullcones are also known in all cases, see e.g. in Zak, \cite{Zak-Book}, Ch. 3. We put $d_1=0$ if $\CC[V]^G=\CC$; this is reflected as $r_{us}=r_{max}$ in the rank column.

Note that most of the ambient spaces are spaces of matrices or bilinear forms and the rank notion has a classical interpretation.

\begin{center}
{\bf rs-continuous varieties, semistable range of rank, degrees of invariants}\\
\vspace{0.5cm}
\begin{tabular}{|c|c|c|c|}
\hline
Variety $\XX\subset \PP(V)$ & $G=Aut\XX$ &  $r_{ss},...,r_{max}$ & $d_1,..d_k$ \\
\hline
\hline
$\PP(\CC^n) = \PP(\CC^n)$& $SL_n$ & $r_{us}=r_{max}=1$ & 0\\
\hline
${\rm Ver}_2(\PP(\CC^n))\subset \PP({\rm S}^2\CC^n)$ & $SL_n$ & $r_{ss}=r_{max}=n$ & $n$ \\
\hline
${\rm Gr}_2(\CC^n) \subset \PP(\Lambda^2\CC^n)$ & $SL_n$ & $r_{us}=r_{max}=\lfloor\frac{n}{2}\rfloor$ & 0 for odd $n$; \\ && $r_{ss}=r_{max}=\lfloor\frac{n}{2}\rfloor$ & $n/2$ for even $n$\\
\hline
${\rm Fl}_{1,n-1}(\CC^n)\subset \PP(\mk{sl}_n)$ & $SL_n$ & $2,...,n$ & $2,...,n$ \\
\hline
Q$^{n-2} \subset \PP(\CC^n)$ & $SO_n$ & $2$ & $2$ \\
\hline
S$^{10} \subset \PP^{15}=\PP(\Lambda^{even}\CC^{5})$ & $Spin_{10}$ & $r_{us}=r_{max}=2$ & 0 \\
\hline
Gr$_2(\CC^{2n},\omega) \subset \PP(\Lambda_0^2\CC^{2n})$ & $Sp_{2n}$ & 2,...,n & 2,4,6...,2n-2\\
\hline
E$^{16} \subset \PP^{26}=\PP({\rm Herm}_{3\times 3}(\OO)_\CC)$ & $E_6$ & $3$ & $3$ \\
\hline
F$^{15} \subset \PP^{25}=\PP({\rm SHerm}_{3\times 3}(\OO)_\CC)$ & $F_4$ & $2,3$ & $2,3$ \\
\hline
${\rm Seg}(\PP^{m-1}\times\PP^{n-1}) \subset \PP(\CC^m\otimes\CC^n)$ & $SL_m\times SL_n$ & $r_{us}=r_{max}=\min\{m,n\}$ & 0 for $m\ne n$; \\ && $r_{ss}=r_{max}=m$ & $m$ for $m=n$ \\
\hline
\end{tabular}
\end{center}

The notation in the above table is as follows: ${\rm Ver}_2$ denotes the quadratic Veronese embedding; ${\rm Gr}_2$ is the Grassmannian of 2-planes; ${\rm Fl}_{1,n-1}$ denotes the 2-step flag variety of lines contained in hyperplanes; ${\rm Q}^{n-2}$ is the $(n-2)$-dimensional quadratic hypersurface; ${\rm S}^{10}$ is the 10-dimensional variety of pure spinors; ${\rm Gr}_2(\CC^{2n},\omega)$ is the variety of 2-planes in $\CC^{2n}$ isotropic for a given nondegenerate skew-symmetric form $\omega$; ${\rm Herm}_{3\times3}(\OO)_\CC$ denotes the complexified space of octonionic Hermitean $3\times 3$-matrices and ${\rm SHerm}_{3\times3}(\OO)_\CC$ is the subspace defined by vanishing of the trace; ${\rm E}^{16}$ is the set of such matrices with rank 1, which can be defined using the Freudenthal determinant and ${\rm F}^{15}={\rm E}^{16}\cap\PP({\rm SHerm}_{3\times3}(\OO)_\CC)$; ${\rm Seg}$ denotes the Segre embedding of a product of projective spaces.

We obtain the following.

\begin{theorem}
Suppose that $\XX\subset\PP$ is rs-continuous and $G$ is the linear automorphism group of $\XX$. Then the invariant ring is polynomial $\CC[V_\lambda]^G=\CC[f_1,...f_k]$. The degrees of the generators, ordered nonincreasingly, coincide with the values of the rank function on the semistable locus: 
$$\{r_{ss},...,r_{max}\}=\{d_1,...,d_k\}$$
except in the case of $Sp_{2n}$ acting on ${\rm Gr}_2(\CC^{2n},\omega)\subset\PP(\Lambda_0^2\CC^{2n})$, where the two sets are related by the bijection $d_j=2(r_j-1)$. Furthermore, the generators can be chosen to vanish on the secant varieties as follows:
$$
\Sigma_r\subset Z(f_{k-(r_{max}-r-1)},...,f_k) \;,\; for\;\; r_{us}\leq r\leq r_{max}-1\;.
$$
\end{theorem}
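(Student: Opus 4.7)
The plan is to combine the general bound $r_{ss}\leq d_1$ from Theorem \ref{Prop Sec and d-Inv}, the rigidity of the rank stratification of an rs-continuous variety, and a case-by-case inspection of the table, whose entries (degrees of invariants) are borrowed from Kac's classification in \cite{Kac-nilp-orb}.

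First I would exploit rs-continuity to identify the nullcone with a secant variety. Since ${\rm rk}_{\XX}=\underline{{\rm rk}}_{\XX}$, the rank strata $\XX_r$ are $G$-invariant locally closed subsets and
$$
\Sigma_r=\XX_1\cup\cdots\cup\XX_r
$$
is already closed, giving a $G$-stable filtration $\XX=\Sigma_1\subset\Sigma_2\subset\cdots\subset\Sigma_{r_{max}}=\PP$. The nullcone $\PP^{us}$ is closed and $G$-invariant, hence a union of strata $\XX_r$. By the very definition of $r_{us}$ we have $\Sigma_{r_{us}}\subset\PP^{us}$, while $\Sigma_{r_{us}+1}\not\subset\PP^{us}$; since every $\XX_r$ with $r>r_{us}$ is $G$-saturated and meets $\PP^{ss}$ in a dense open subset (because its $G$-orbit structure, read off the table, contains a semistable orbit as soon as the stratum is not inside the nullcone), one concludes $\PP^{us}=\Sigma_{r_{us}}$.

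Polynomiality of $\CC[V_\lambda]^G$ in each line of the table is part of Kac's classification, and furnishes explicit generating degrees $d_1\leq\cdots\leq d_k$. I would then compare the two multisets line by line: in every case except the symplectic Grassmannian one reads $\{d_1,\dots,d_k\}=\{r_{ss},\dots,r_{max}\}$, whereas for $Sp_{2n}$ on ${\rm Gr}_2(\CC^{2n},\omega)$ the invariants are the even-degree $Sp_{2n}$-Pfaffian-type invariants, accounting for the shift $d_j=2(r_j-1)$. The bound $r_{ss}\leq d_1$ from Theorem \ref{Prop Sec and d-Inv} guarantees one direction of the correspondence in every case; the other direction is the assertion that no degrees are missed, which follows by matching numbers from the table.

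Finally, to arrange the vanishing $\Sigma_r\subset Z(f_{k-(r_{max}-r-1)},\dots,f_k)$ I would argue by descending induction on $r$ from $r_{max}-1$ to $r_{us}$. At each step the ideal $I(\Sigma_r)\cap\CC[V]^G$ is a $G$-stable (in fact just a graded) ideal of $\CC[V]^G$; by Theorem \ref{Prop Sec and d-Inv} it contains no element of degree $\leq r$, and by the numerical match already established, exactly the top generators $f_{k-(r_{max}-r-1)},\dots,f_k$ have degrees strictly greater than $r$. Standard modification of the chosen generators by invariants of lower degree (which are already known to be non-zero on $\Sigma_r$ in view of the inductive hypothesis) allows one to replace each of these $f_j$ by a generator of the same degree that vanishes on $\Sigma_r$. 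The step that needs most care, and the main obstacle, is precisely this last filtration argument in the symplectic Grassmannian case, where the degree–rank correspondence is not the identity but $d_j=2(r_j-1)$: one has to verify that the Pfaffian-type generators, ordered by degree, vanish on the matching secant varieties, which I would do by a direct computation using the explicit description of $\Sigma_r\subset\PP(\Lambda_0^2\CC^{2n})$ as the variety of primitive 2-forms of Pfaffian rank $\leq 2r$.
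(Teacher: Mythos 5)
Your overall strategy --- polynomiality and the degree list from Kac's table, the inequality $r_{ss}\leq d_1$ from Theorem \ref{Prop Sec and d-Inv}, and a line-by-line numerical match --- is exactly what the paper does (the paper offers no further argument beyond the table and the cited classifications). However, your first paragraph contains a claim that is false and whose justification is a non sequitur: the nullcone is \emph{not} $\Sigma_{r_{us}}$, and being closed and $G$-invariant does not make it a union of rank strata, because the strata $\XX_r$ are $G$-invariant but far from being single orbits. The row ${\rm Fl}_{1,n-1}(\CC^n)\subset\PP(\mk{sl}_n)$ is a counterexample: there $r_{us}=1$ and $\Sigma_1=\XX$ is the (projectivized) minimal nilpotent orbit, while $\PP^{us}$ is the entire projectivized nilpotent cone, which contains regular nilpotent elements of $\XX$-rank $n-1$. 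The stratum $\XX_2$ contains both the unstable point $[e_1e_2^T+e_2e_3^T]$ and the semistable point $[{\rm diag}(1,-1,0,\dots,0)]$, so $\PP^{us}$ is not a union of strata and $\PP^{us}\supsetneq\Sigma_{r_{us}}$. Fortunately this identification is not actually needed for the statement, but as written it is an error, not a harmless simplification.

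The second genuine gap is in the vanishing claim $\Sigma_r\subset Z(f_{k-(r_{max}-r-1)},\dots,f_k)$. Your descending induction reduces correctly to modifying the single generator $f$ of degree $r+1$ by a weighted-homogeneous polynomial in the lower-degree generators so that it vanishes on $\Sigma_r$; but the existence of such a modification is precisely the assertion that the restriction of $f$ to the cone over $\Sigma_r$ lies in the subalgebra generated by the restrictions of $f_1,\dots,f_{k-(r_{max}-r)}$, and this does not follow from the inductive hypothesis or from Theorem \ref{Prop Sec and d-Inv} (which only gives the opposite implication: invariants vanishing on $\Sigma_r$ have degree $>r$). Calling this a ``standard modification'' hides the actual content. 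What saves the statement is, again, case-by-case classical knowledge: in each row the generators can be taken to be sums of principal minors, Pfaffian-type invariants, coefficients of a characteristic or Freudenthal-determinant polynomial, etc., and these visibly vanish on the appropriate rank loci. You flag this computation only for the symplectic Grassmannian, but it is needed (or at least must be invoked) in every case with $k>1$.
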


\begin{rem}

1) The rs-continuous varieties, where either $r_{us}=r_{max}$ or $r_{ss}=r_{max}$ (so that by the theorem $\CC[V_\lambda]^G$ has either zero or one generator) are exactly the so-called subcominuscule varieties: the closed projective orbits in the isotropy representations of irreducible Hermitian symmetric spaces. Incidentally, these varieties are obtained as closed projective orbits in irreducible representations of reductive groups where the action on the projective space is spherical, but the spherically acting group can be a subgroup of the automorphism group.

2) In the remaining three cases, where $r_{ss}<r_{max}$ (so that $\CC[V_\lambda]^G$ has more than one generator), the variety $X$ is a hyperplane section in an rs-continuous variety $\tilde X$ of the first type, and the rank function is obtained by restriction; these are ${\rm Fl}_{1,n-1}(\CC^n)={\rm Segre}(\PP^{n-1}\times(\PP^{n-1})^*)\cap\PP(\mk{sl}_{n})$, ${\rm Gr}_2(\CC^{2n},\omega)={\rm Gr}_2(\CC^{2n})\cap\PP(\omega^\perp)$ and ${\rm F}^{15}={\rm E}^{16}\cap\PP({\rm SHerm}_{3\times 3}(\OO)_\CC)$. The corresponding embeddings of linear automorphism groups are symmetric subgroups, i.e., given by fixed point sets of involutions $SL_n\subset SL_n\times SL_n$, $Sp_{2n}\subset SL_{2n}$ and $F_4\subset E_6$. 
\end{rem}

\bibliographystyle{plain}

\small{

}

\vspace{0.4cm}

\author{\noindent Valdemar V. Tsanov \\
Ruhr-Universit\"at Bochum\\
Fakult\"at f\"ur Mathematik, IB 3/101\\ 
D-44780 Bochum, Germany.\\
Email: Valdemar.Tsanov@rub.de}\\

\end{document}